\documentclass[11pt,twoside]{article}
\usepackage{geometry}
\geometry{left=2.7cm,right=2.7cm,top=3cm,bottom=3.2cm}
\usepackage[colorlinks,
            linkcolor=blue,
            anchorcolor=black,
            citecolor=blue
            ]{hyperref}
\usepackage{color,xcolor}
\usepackage{marvosym}
\usepackage{titlesec}
\usepackage{fancyhdr}
\usepackage{cite}
\usepackage{ifthen}
\usepackage{pifont}
\usepackage{tabularx}
\usepackage{booktabs}
\usepackage{diagbox}
\usepackage{makecell}
\usepackage{mathtools}
\usepackage{dcolumn}
\usepackage{stmaryrd}
\usepackage{setspace}
\usepackage{indentfirst}
\usepackage{amsmath,amssymb,amscd,bbm,amsthm,mathrsfs,dsfont,bm}
\usepackage{cases}
\usepackage{mathtools}
\usepackage{enumitem}
\usepackage[T1]{fontenc}
\urlstyle{sf}
\numberwithin{table}{section}

\usepackage{multirow}
\usepackage{multicol}
\usepackage{float}
\usepackage{graphicx}
\usepackage{subfigure}

\newtheorem{theorem}{\sc Theorem}[section]

\newtheorem{lemma}{\sc Lemma}[section]

\theoremstyle{definition}

\newtheorem{remark}{\sc Remark}[section]
\newtheorem{example}{\sc Example}[section]

\newtheorem*{proofa}{\it Proof of Theorem \ref{thm-1}}

\numberwithin{equation}{section}

\usepackage[T1]{fontenc}
\urlstyle{sf}

\newboolean{first}
\setboolean{first}{true}

\pagestyle{fancy}
\fancyhf{}
\fancyhead[LE,RO]{-~\thepage~-}
\fancyhead[LO]{\rm\rightmark} \fancyhead[RE]{\rm\leftmark}
\fancyhead[L]{\thepage}\fancyhead[LE]{\small  A lower bound for the beta function}
\fancyhead[R]{\thepage}\fancyhead[RO]{\small T. Zhao and M. Wang}

\title{\vspace{-2cm}\begin{flushleft} 
  \fontsize{15pt}{\baselineskip}\selectfont \textit{Research Article}
\end{flushleft}\vspace{-0.3cm}
\bf \fontsize{18pt}{\baselineskip}\selectfont
\begin{flushleft}
A lower bound for the beta function
\end{flushleft}
}
\author{\hspace{-8.3cm}  Tiehong Zhao$^1$,\quad Miaokun Wang$^{2,\href{mailto:wangmiaokun@zjhu.edu.cn}{\color{black}{\textrm{\Letter}}}}$}

\date{}

\begin{document}
\maketitle
\vspace{-0.5cm}
\footnote{\hspace{-0.35cm}$^{\href{mailto:chuyuming@zjhu.edu.cn}{\color{black}{\textrm{\Letter}}}}$\  Miaokun Wang}   
\footnote{\texttt{wangmiaokun@zjhu.edu.cn}}
\footnote{\vspace{-2mm}}
\footnote{Tiehong Zhao}   
\footnote{\texttt{tiehong.zhao@hznu.edu.cn}} 
\footnote{\hspace{-0.48cm}\rule{4.8cm}{0.1mm}}
\footnote{\hspace{-0.35cm}$^1$\ \ School of mathematics, Hangzhou Normal University, 311121, People's Republic of China}
\footnote{\hspace{-0.35cm}$^2$\ \ Department of mathematics, Huzhou University, Huzhou 313000, People's Republic of China}

\begin{center}
\begin{small}
\begin{minipage}{135mm}
{\bf Abstract.}
We present a new lower bound for Euler's beta function, $B(x,y)$, which states that the inequality 
\begin{equation*}
    B(x,y)>\frac{x+y}{xy}\left(1-\frac{2xy}{x+y+1}\right)
\end{equation*}
holds on $(0,1]\times(0,1]$, which  improves a lower bound obtained by P. Iv\'{a}dy \cite[Theorem, (3.2)]{Ivady2012} in the case of $0<x+y<1$.
\end{minipage}

\bigskip

\begin{minipage}{135mm}
{\bf Keywords.}\ beta function, gamma function, psi function, inequalities
\end{minipage}

\bigskip

\begin{minipage}{135mm}
{\bf Mathematics Subject Classification.}\ \ Primary 26D07 $\cdot$ Secondary 33B15
\end{minipage}
\end{small}
\end{center}

\section{Introduction}\label{sec-1}

The classical gamma function and its logarithmic derivative (or psi function) are defined as
\begin{equation*}
    \Gamma(x)=\int_0^\infty e^{-t}t^{x-1}dt\quad\text{and}\quad \psi(x)=\frac{\Gamma'(x)}{\Gamma(x)}, \qquad x>0.
\end{equation*}
The derivatives of $\psi$, i.e. $\psi',\psi'',\cdots$, are called polygamma functions. The beta function, as a well-known Euler’s integral of the first kind, is defined by
\begin{equation*}
    B(x,y)=\int_0^1 t^{x-1}(1-t)^{y-1}dt, \quad x,y>0,
\end{equation*}
which is closely related to the gamma function by a elegant identity
\begin{equation*}
    B(x,y)=\frac{\Gamma(x)\Gamma(y)}{\Gamma(x+y)}.
\end{equation*}
The utility of the beta function is often overshadowed by that of the gamma function, partly perhaps because it can be evaluated in terms of the gamma function. In fact, it has various applications not only in the theory of special functions, but it also plays a role in other fields, for instance, statistics, mathematical physics, and graph theory; see \cite{AC2009,Dwyer,Rao1981}.

In the recent past years, numerous papers on remarkable inequalities related to the gamma and polygamma functions were published (see \cite{YT2023,Atale2022,CM2022,PM2022,Yinl2022,LZW2022,Qi2021,DS2020} and and the references therein), only few inequalities concerning the beta function can be found in the literature \cite{Alzer2001,Alzer2003,Cerone2007, DAB2000,FR2022}. 

In 2000, S. Dragomir at al. \cite{DAB2000} proved that the inequality 
\begin{equation*}
    0\leq\frac{1}{xy}-B(x,y)\leq\frac{1}{4}
\end{equation*}holds for $x,y\geq1$. Later, Alzer \cite{Alzer1997} improved $1/4$ to the best possible constant $\max\limits_{x\geq1}\{\Delta(x)\}=0.08731\cdots$, where $\Delta(x)=1/x^2-\Gamma^2(x)/\Gamma(2x)$.

For all $x,y\in(0,1]$, sharp rational bounds for the difference $1/(xy)-B(x,y)$ were obtained in \cite{Alzer2003}, which are given by
\begin{equation}\label{intro-1}
    \frac{1}{xy}\left[1-\alpha\frac{(1-x)(1-y)}{(1+x)(1+y)}\right]\leq B(x,y)\leq\frac{1}{xy}\left[1-\beta\frac{(1-x)(1-y)}{(1+x)(1+y)}\right]
\end{equation}with the best possible constants $\alpha=\frac{2}{3}\pi^2-4=2.57973\cdots$ and $\beta=1$. 

In 2012, Iv\'{a}dy \cite{Ivady2012} strengthened the right side of \eqref{intro-1}, although there were some errors in the proof and had been corrected in \cite{Ivady2016}, in which a double inequality for $B(x,y)$ was presented, explicitly, for all $x,y\in(0,1]$, the inequalities
\begin{equation}\label{inequal-Ivady}
    \frac{x+y-xy}{xy}\leq B(x,y)\leq\frac{x+y}{xy(1+xy)}
\end{equation}hold with equality only for $x=y=1$. We remark that not only the upper bound of \eqref{intro-1} has been improved, but the lower bound of \eqref{inequal-Ivady} is also better than that of \eqref{intro-1} for $x,y\in(0,1]$ with $x+y\leq1$.
Indeed, due to $\alpha>5/2$, for $x+y\leq1$,
\begin{align*}
&x+y-xy-\left[1-\alpha\frac{(1-x)(1-y)}{(1+x)(1+y)}\right]>x+y-xy-\left[1-\frac{5}{2}\frac{(1-x)(1-y)}{(1+x)(1+y)}\right]\\
&=\frac{(1-x)(1-y)(3-2x-2y-2xy)}{2(1+x)(1+y)}\geq\frac{(1-x)(1-y)\left[3-2x-2y-(x+y)^2\right]}{2(1+x)(1+y)}\\
&=\frac{(1-x)(1-y)(1-x-y)(3+x+y)}{2(1+x)(1+y)}\geq0.
\end{align*}
The aim of this article is to present a new lower bound of $B(x,y)$ for all $x,y\in(0,1]$.
\begin{theorem}\label{thm-1}
Let $x,y\in(0,1]$. Then the inequality
\begin{equation*}
    B(x,y)>\frac{x+y}{xy}\left(1-\frac{2xy}{x+y+1}\right)
\end{equation*}holds.
\end{theorem}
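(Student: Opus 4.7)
The plan is to reduce the theorem, via the Weierstrass product representation of $B$, to a one-variable inequality for the digamma function that is accessible by elementary calculus. Starting from the Weierstrass form of $\Gamma$, one derives
\[
B(x,y)=\frac{x+y}{xy}\prod_{n=1}^{\infty}\frac{n(n+x+y)}{(n+x)(n+y)}=\frac{x+y}{xy}\prod_{n=1}^{\infty}\left(1-\frac{xy}{(n+x)(n+y)}\right).
\]
Each factor $a_n:=xy/\bigl((n+x)(n+y)\bigr)$ lies in $(0,1)$ and is nonzero, so the strict Weierstrass product-sum inequality $\prod(1-a_n)>1-\sum a_n$ yields
\[
\frac{xy\,B(x,y)}{x+y}>1-xy\sum_{n=1}^{\infty}\frac{1}{(n+x)(n+y)}.
\]
The theorem will therefore follow at once from the key lemma
\[
S(x,y):=\sum_{n=1}^{\infty}\frac{1}{(n+x)(n+y)}\le\frac{2}{x+y+1}\qquad\text{for all }(x,y)\in(0,1]^2.
\]

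To prove the lemma I would note that $(n+x)(n+y)=n^2+n(x+y)+xy$, so $S$ depends on $(x,y)$ only through $s=x+y$ and $p=xy$, and is strictly decreasing in $p$ with $s$ fixed. On the slice $\{x+y=s\}\cap(0,1]^2$ the maximum of $S$ is thus attained where $p$ is minimal: at the limit $y\to 0^+$ when $s\in(0,1]$, and at the point $(1,s-1)$ when $s\in(1,2]$. The lemma therefore reduces to two one-dimensional inequalities,
\[
g(s):=2s-(s+1)\bigl(\gamma+\psi(1+s)\bigr)\ge 0,\ s\in[0,1],\qquad G(t):=2(1-t)-(t+2)\bigl(1-\gamma-\psi(1+t)\bigr)\ge 0,\ t\in[0,1],
\]
both of which vanish at the endpoints.

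For $g$, using the series $\psi'(1+s)=\sum(n+s)^{-2}$ and $\psi''(1+s)=-2\sum(n+s)^{-3}$ one finds $g''(s)=2\sum_{n=1}^{\infty}(1-n)/(n+s)^{3}$; the $n=1$ term vanishes and all others are negative, so $g''<0$ on $[0,\infty)$, and strict concavity together with $g(0)=g(1)=0$ yields $g\ge 0$. The main obstacle is the case of $G$: the analogous computation gives $G''(t)=2\sum_{n=1}^{\infty}(n-2)/(n+t)^{3}$, whose sign is not constant, so concavity fails. I would rescue the argument by passing to one more derivative,
\[
G'''(t)=6\Bigl[(1+t)^{-4}-\sum_{n=3}^{\infty}(n-2)(n+t)^{-4}\Bigr],
\]
and bounding the tail sum on $[0,1]$ by its value at $t=0$, which equals $\zeta(3)-2\zeta(4)+1\approx 0.037$, while $(1+t)^{-4}\ge 2^{-4}=0.0625$ on the same interval. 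Hence $G'''>0$ on $[0,1]$, so $G''$ is strictly increasing and has a unique zero; together with $G'(0)>0>G'(1)$ this forces $G'$ to vanish exactly once in $(0,1)$, giving $G$ a single interior maximum, and combined with $G(0)=G(1)=0$ we conclude $G\ge 0$. Strictness of the theorem on all of $(0,1]^2$ is guaranteed by the strict Weierstrass step, even at the boundary configuration $(x,y)\to(0,1)$ where the lemma itself attains equality.
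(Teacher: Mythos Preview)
Your argument is correct and is genuinely different from, and considerably more elementary than, the paper's proof. The paper first disposes of the region $x+y\ge 1$ by quoting Iv\'{a}dy's lower bound and then, on the triangle $0<x+y\le 1$, works with $F(x,y)=\log\bigl[\tfrac{xyB(x,y)}{x+y}\bigr]-\log\bigl(1-\tfrac{2xy}{x+y+1}\bigr)$; it rules out interior critical points of $F$ by a delicate case analysis on $G=\partial_xF-\partial_yF$, using Yang's rational approximations $\mathcal{L}_x,\mathcal{L}_{xx}$ to $\psi',\psi''$ together with a battery of explicit ``PN--type'' polynomial sign checks, and finally handles the boundary via Lemma~\ref{lm-3} and Case~1. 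Your route bypasses all of this machinery: the Weierstrass product for $B$ plus the strict product--sum inequality reduces the theorem to the clean lemma $S(x,y)\le 2/(x+y+1)$; observing that $S$ depends only on $(s,p)=(x+y,xy)$ and decreases in $p$ pushes the problem to two one--variable endpoint inequalities $g\ge 0$ and $G\ge 0$, each vanishing at both ends, which you settle by direct concavity (for $g$) and a short $G'''>0$ argument (for $G$). Two minor points worth making explicit in a final write--up: the tail bound $\sum_{n\ge 3}(n-2)(n+t)^{-4}\le \sum_{n\ge 3}(n-2)n^{-4}=\zeta(3)-2\zeta(4)+1$ uses that each term is decreasing in $t$, and the numerical comparison $\zeta(3)-2\zeta(4)+1<1/16$ should be stated as a rigorous inequality (it follows from $2\zeta(4)-\zeta(3)>15/16$, easily certified from $\zeta(4)=\pi^4/90$ and any crude upper bound on $\zeta(3)$). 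What your approach buys is a self--contained, conceptually transparent proof that treats all of $(0,1]^2$ at once; what the paper's approach buys is a template that could in principle be pushed to sharper rational bounds, at the cost of heavier computation.
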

\begin{remark}
It was observed that  
\begin{equation*}
    1-\frac{xy}{x+y}\geq 1-\frac{2xy}{x+y+1}
\end{equation*}holds for $x+y\geq1$ and $xy>0$, which together with \eqref{inequal-Ivady} implies
\begin{equation}\label{sec3-1}
    B(x,y)\geq  \frac{x+y}{xy}\left(1-\frac{xy}{x+y}\right)\geq \frac{x+y}{xy}\left(1-\frac{2xy}{x+y+1}\right),
\end{equation}where the equality cannot hold simultaneously. This enables us to prove the Theorem \ref{thm-1} only in the case of $0<x+y<1 $. Moreover, if Theorem \ref{thm-1} is true, then 
\begin{equation*}
    B(x,y)>\frac{x+y}{xy}\left(1-\frac{2xy}{x+y+1}\right)\geq\frac{x+y}{xy}\left(1-\frac{xy}{x+y}\right)
\end{equation*}holds for $x,y\in(0,1]$ with $x+y\leq1$, of which in the case, that is to say, our lower bound is better than that of \eqref{inequal-Ivady}. 
\end{remark}

\section{Lemmas}

Before proving the main theorem, we need the following four lemmas.
The first one is to estimate a lower bound for the difference of two psi functions, which is contributed by H Alzer \cite[Theorem 7]{Alzer1997}

\begin{lemma}\label{lm-1}
Let $n\geq0$ be an integer and $s\in(0,1)$. Then the inequality
\begin{equation*}
   \psi(x+1)-\psi(x+s)>(1-s)\left[\frac{1}{x+s+n}+\sum_{i=0}^{n-1}\frac{1}{(x+i+1)(x+i+s)}\right] 
\end{equation*}holds for $x>0$.
\end{lemma}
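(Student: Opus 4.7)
The plan is to start from the classical Gauss series representation
$$\psi(z)=-\gamma+\sum_{k=0}^{\infty}\left(\frac{1}{k+1}-\frac{1}{k+z}\right),\qquad z>0,$$
and then derive the inequality by an elementary telescoping estimate on the tail. Substituting $z=x+1$ and $z=x+s$ and subtracting, the constant $-\gamma$ and the terms $1/(k+1)$ cancel; merging the two remaining fractions over a common denominator produces the identity
$$\psi(x+1)-\psi(x+s)=(1-s)\sum_{k=0}^{\infty}\frac{1}{(k+x+s)(k+x+1)}.$$
Since $1-s>0$, the claim of Lemma \ref{lm-1} is therefore equivalent to
$$\sum_{k=0}^{\infty}\frac{1}{(k+x+s)(k+x+1)}>\frac{1}{x+s+n}+\sum_{i=0}^{n-1}\frac{1}{(x+i+1)(x+i+s)}.$$
The first $n$ terms of the infinite sum (with $k=i$) reproduce the finite sum on the right \emph{exactly}, so everything reduces to the tail estimate
$$\sum_{k=n}^{\infty}\frac{1}{(k+x+s)(k+x+1)}>\frac{1}{x+s+n}.$$

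For this tail estimate I would exploit the fact that $s\in(0,1)$ forces $k+x+1<k+x+s+1$, which gives the strict term-by-term comparison
$$\frac{1}{(k+x+s)(k+x+1)}>\frac{1}{(k+x+s)(k+x+s+1)}=\frac{1}{k+x+s}-\frac{1}{k+x+s+1}.$$
Summing these strict inequalities from $k=n$ to $k=\infty$, the right side telescopes to $\frac{1}{n+x+s}$, and strictness is preserved because each comparison is already strict and all summands are positive. Combining with the reduction of the first paragraph then yields Lemma \ref{lm-1} directly.

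I do not anticipate a serious obstacle here: once the Gauss series identity is invoked, the argument becomes essentially a one-line positivity computation. The only subtle point is the legitimacy of splitting the series at index $n$ and doing a term-by-term comparison on the tail, but this is immediate since every summand is positive so partial sums increase monotonically to their limits. As an alternative I would briefly mention induction on $n$: the base case $n=0$ is exactly the tail bound with $n=0$ above, and the inductive step reduces to the elementary inequality $\frac{1}{(x+n+1)(x+n+s)}>\frac{1}{(x+s+n)(x+s+n+1)}$, which is again equivalent to $s>0$. Either route delivers the lemma with little further effort.
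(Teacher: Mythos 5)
Your argument is correct and complete. Note that the paper itself gives no proof of this lemma; it is imported verbatim from Alzer \cite[Theorem 7]{Alzer1997}, so there is no internal argument to compare against. Your route — the Gauss series $\psi(x+1)-\psi(x+s)=(1-s)\sum_{k\ge0}\frac{1}{(k+x+s)(k+x+1)}$, exact matching of the first $n$ terms with the finite sum in the statement, and the telescoping tail bound
\begin{equation*}
\sum_{k=n}^{\infty}\frac{1}{(k+x+s)(k+x+1)}>\sum_{k=n}^{\infty}\left(\frac{1}{k+x+s}-\frac{1}{k+x+s+1}\right)=\frac{1}{n+x+s}
\end{equation*}
— is sound at every step: the partial-fraction identity is right, the term-by-term comparison uses exactly $s<1$, and strictness survives the summation because the $k=n$ comparison is already strict and the remaining differences are positive. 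This gives a self-contained, essentially one-page proof of a result the paper only cites.
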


The following lemma plays an important role in the proof of the main theorem, which gives the rational bounds for the derivatives of psi function. In \cite{Yang2014}, Yang introduced the function 
\begin{equation*}
    \mathcal{L}(x,a)=\frac{1}{90a^2+2}\log\left(x^2+x+\frac{3a+1}{3}\right)+\frac{45a^2}{90a^2+2}\log\left(x^2+x+\frac{15a-1}{45a}\right),
\end{equation*}which will be approximated to psi function. Taking $a=2/5$ or $4/5$, several partial derivatives of $\mathcal{L}(x,a)$ are given by
\begin{small}
  \begin{align*}
 \mathcal{L}_x(x,2/5)&=\frac{3(1+2x)(61+90x+90x^2)}{2(11+15x+15x^2)(5+18x+18x^2)},\\
 \mathcal{L}_x(x,4/5)&=\frac{3(1+2x)(199+180x+180x^2)}{2(17+15x+15x^2)(11+36x+36x^2)},\\
 \mathcal{L}_{xx}(x,2/5)&=-\frac{3(4993+36546x+110526x^2+196560x^3+219780x^4+145800x^5+48600x^6)}{2(11+15x+15x^2)^2(5+18x+18x^2)^2},\\
  \mathcal{L}_{xx}(x,4/5)&=-\frac{3(46537+322206x+784446x^2+1118880x^3+1045440x^4+583200x^5+194400x^6)}{2(17+15x+15x^2)^2(11+36x+36x^2)^2}.
\end{align*} 
\end{small}
In particular, it has been proved in \cite[Lemma 1]{Yang2014} that $a\mapsto\mathcal{L}_x(x,a)$ is decreasing and $a\mapsto\mathcal{L}_{xx}(x,a)$ is increasing on $(1/15,\infty)$. This together with \cite[Corollary 3.3]{ZYC2015} gives the following lemma.

\begin{lemma}
 Let $a_1=\frac{40+3\sqrt{205}}{105}=0.79003\cdots$, $a_2=\frac{45-4\pi^2+3\sqrt{4\pi^4-80\pi^2+405}}{30(\pi^2-9)}=0.47053\cdots$ and $a_3=0.43218\cdots$ be is the unique solution of the equation $\mathcal{L}_{xx}(0,a)=\psi''(1)$.
Then the inequalities
\begin{align}\label{lm2.2-1}
   \mathcal{L}_x(x,4/5)<\mathcal{L}_x(x,a_1)&<\psi'(x+1)<\mathcal{L}_x(x,a_2)<\mathcal{L}_x(x,2/5) \\ \label{lm2.2-2}
   \mathcal{L}_{xx}(x,2/5)<\mathcal{L}_{xx}(x,a_3)&<\psi''(x+1)<\mathcal{L}_{xx}(x,a_1)<\mathcal{L}_{xx}(x,4/5)
\end{align}
hold for all $x>0$ with the best possible constants $a_1,a_2$ and $a_3$. 
\end{lemma}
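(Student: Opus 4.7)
The plan is to derive the entire lemma by assembling two results already in the literature: the monotonicity in $a$ from [Yang2014, Lemma 1] (restated in the excerpt), namely that $a\mapsto\mathcal{L}_x(x,a)$ is strictly decreasing and $a\mapsto\mathcal{L}_{xx}(x,a)$ is strictly increasing on $(1/15,\infty)$, and the sharp inner two-sided bounds $\mathcal{L}_x(x,a_1)<\psi'(x+1)<\mathcal{L}_x(x,a_2)$ and $\mathcal{L}_{xx}(x,a_3)<\psi''(x+1)<\mathcal{L}_{xx}(x,a_1)$ supplied by [ZYC2015, Corollary 3.3]. The only genuinely new step is verifying that the numerical ordering of the five constants $\{2/5,\,a_3,\,a_2,\,a_1,\,4/5\}$ lines up so that monotonicity can be invoked in the correct direction.

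First I would record the ordering $2/5 < a_3 < a_2 < a_1 < 4/5$, which is immediate from the stated values $a_1=0.79003\cdots$, $a_2=0.47053\cdots$, $a_3=0.43218\cdots$. Since $a\mapsto\mathcal{L}_x(x,a)$ is strictly decreasing on $(1/15,\infty)$, the comparisons $2/5<a_2$ and $a_1<4/5$ immediately yield the outer inequalities $\mathcal{L}_x(x,4/5)<\mathcal{L}_x(x,a_1)$ and $\mathcal{L}_x(x,a_2)<\mathcal{L}_x(x,2/5)$; concatenating them with the inner sandwich from [ZYC2015] completes \eqref{lm2.2-1}. The chain \eqref{lm2.2-2} follows by the same mechanism, this time using that $a\mapsto\mathcal{L}_{xx}(x,a)$ is strictly increasing, so that the comparisons $2/5<a_3$ and $a_1<4/5$ produce the outer bounds $\mathcal{L}_{xx}(x,2/5)<\mathcal{L}_{xx}(x,a_3)$ and $\mathcal{L}_{xx}(x,a_1)<\mathcal{L}_{xx}(x,4/5)$.

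Sharpness of $a_1,a_2,a_3$ within the inner sandwiches is inherited directly from the cited corollary, but it is instructive to identify where each constant is pinned down. The constant $a_3$ is defined by the endpoint condition $\mathcal{L}_{xx}(0,a)=\psi''(1)$ explicitly stated in the lemma, so its optimality is realized as $x\to 0^+$. Similarly, $a_2$ is the unique solution of $\mathcal{L}_x(0,a)=\psi'(1)=\pi^2/6$; after clearing denominators this reduces to a quadratic in $a$ with coefficients involving $\pi^2$, which recovers the explicit radical expression in the statement. The remaining constant $a_1$, having no transcendental content, must instead be fixed by an asymptotic matching condition as $x\to\infty$ (either of $\mathcal{L}_x$ with $\psi'$ or of $\mathcal{L}_{xx}$ with $\psi''$), giving the algebraic value $\frac{40+3\sqrt{205}}{105}$.

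The main obstacle of this proof is therefore not analytic but organizational: the hard work of establishing the monotonicity in $a$ and the sharp inner sandwich around $\psi'(x+1)$ and $\psi''(x+1)$ has been delegated to [Yang2014] and [ZYC2015], and the only care required on our side is to invoke monotonicity in the right direction for each of the four outer comparisons and to check that the constants $a_1,a_2,a_3$ as introduced in those sources coincide with the quantities appearing in the present lemma. No new estimate involving $\psi'$ or $\psi''$ beyond what is cited is needed.
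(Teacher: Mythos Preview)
Your proposal is correct and matches the paper's own justification exactly: the paper does not give a separate proof but simply states, just before the lemma, that it follows from combining the monotonicity in $a$ of $\mathcal{L}_x$ and $\mathcal{L}_{xx}$ from \cite[Lemma~1]{Yang2014} with the sharp inner bounds from \cite[Corollary~3.3]{ZYC2015}. Your additional remarks on how $a_1,a_2,a_3$ are pinned down (endpoint conditions at $x\to 0^+$ versus asymptotic matching as $x\to\infty$) go slightly beyond what the paper spells out but are consistent with the cited sources.
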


\begin{lemma}\label{lm-3}
Let $0<x<(\sqrt{3}+1)/2$ and define the function
\begin{equation*}
    f(x)=\log\left[\frac{\Gamma(x+1)^2}{\Gamma(2x+1)}\right]-\log\left(1-\frac{2x^2}{1+2x}\right).
\end{equation*}Then $f(x)>0$.
\end{lemma}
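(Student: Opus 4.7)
The plan is to show that $f(0)=f'(0)=0$ and $f''(x)>0$ on $(0,(\sqrt{3}+1)/2)$, whence two applications of the mean value theorem yield $f(x)>0$ for every $x$ in this interval. Note that $(\sqrt{3}+1)/2$ is the positive root of $1+2x-2x^2=0$, so $f$ is well-defined on the open interval, and $f(0)=\log 1-\log 1=0$ is immediate.

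Differentiating $f$ and applying the Legendre duplication identity $2\psi(2x)=2\log 2+\psi(x)+\psi(x+1/2)$ together with $\psi(z+1)=\psi(z)+1/z$, one rewrites $2\psi(x+1)-2\psi(2x+1)=\psi(x+1)-\psi(x+\tfrac12)-2\log 2$ to obtain
\[f'(x)=\psi(x+1)-\psi(x+\tfrac12)-2\log 2+\frac{2}{1+2x}-\frac{2-4x}{1+2x-2x^2}.\]
Since $\psi(1)-\psi(1/2)=2\log 2$, this gives $f'(0)=0$. A second differentiation, invoking the twice-differentiated duplication identity (which simplifies cleanly to $4\psi'(2x+1)=\psi'(x+1)+\psi'(x+1/2)$, because the $1/x^2$ contributions from $\psi'(z+1)=\psi'(z)-1/z^2$ cancel), yields the compact form
\[f''(x)=\psi'(x+1)-\psi'(x+\tfrac12)+\frac{8(1-x+x^2)}{(1+2x-2x^2)^2}-\frac{4}{(1+2x)^2}.\]

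The remaining task, and the heart of the proof, is to verify that $f''(x)>0$ on $(0,(\sqrt{3}+1)/2)$. To bring the argument of $\psi'(x+1/2)$ into the range where Lemma 2.2 applies directly, I would use the recurrence $\psi'(x+1/2)=\psi'(x+3/2)+4/(1+2x)^2$ and then apply the rational bounds from \eqref{lm2.2-1}, namely $\psi'(x+1)>\mathcal{L}_x(x,4/5)$ and $\psi'(x+3/2)=\psi'((x+1/2)+1)<\mathcal{L}_x(x+1/2,2/5)$, to obtain the purely rational lower estimate
\[f''(x)>\mathcal{L}_x(x,4/5)-\mathcal{L}_x(x+\tfrac12,2/5)+\frac{8(1-x+x^2)}{(1+2x-2x^2)^2}-\frac{8}{(1+2x)^2}.\]
After clearing denominators (each factor is strictly positive on the open interval in question), the required inequality reduces to the nonnegativity of a single polynomial in $x$. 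The principal obstacle is precisely this final algebraic certification: the $\mathcal{L}_x$ factors are rational functions whose numerators and denominators are cubic and quartic in $x$ respectively, so the resulting polynomial has rather large degree, and its positivity on the bounded interval $(0,(\sqrt{3}+1)/2)$ must be established, either by expanding in powers of $x$ and of $((\sqrt{3}+1)/2-x)$ to exhibit manifestly nonnegative coefficients, or by splitting the interval at a convenient intermediate point and analyzing each piece by hand.
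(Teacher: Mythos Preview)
Your strategy is exactly the paper's: show $f(0)=f'(0)=0$ and $f''(x)>0$, then conclude. The paper writes $f'(x)=2\hat f(x)$ with $\hat f(x)=\psi(x+1)-\psi(2x+1)+\dfrac{2x(1+x)}{(1+2x)(1+2x-2x^2)}$ and proves $\hat f'(x)>0$, which is the same as your $f''(x)>0$.

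The one difference, and the reason your ``principal obstacle'' does not arise in the paper, is that the paper never passes through the duplication formula. It bounds the two trigamma terms \emph{directly} via Lemma~2.2, using $\psi'(x+1)>\mathcal{L}_x(x,4/5)$ and $\psi'(2x+1)<\mathcal{L}_x(2x,2/5)$ (the upper bound applied at the point $2x$). After clearing denominators the numerator turns out to be a degree-$12$ polynomial in $x$ with \emph{all coefficients positive}, so positivity on $(0,\infty)$ is immediate and no interval splitting or re-expansion is needed. Your rewriting $\psi'(x+1)-\psi'(x+\tfrac12)=\psi'(x+1)-\psi'(x+\tfrac32)-4/(1+2x)^2$ and then bounding $\psi'(x+\tfrac32)<\mathcal{L}_x(x+\tfrac12,2/5)$ is legitimate, but substituting $x+\tfrac12$ into $\mathcal{L}_x(\cdot,2/5)$ destroys the clean coefficient structure and leaves you with the algebraic certification you flag as unfinished. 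So the gap in your write-up is real but easily closed: drop the duplication detour, keep the expression $\psi'(x+1)-2\psi'(2x+1)$, and apply the two $\mathcal{L}_x$ bounds at $x$ and $2x$.
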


\begin{proof}
    Differentiation yields
    \begin{align}\label{lm2.4-1}
        f'(x)&=2\left[\psi(x+1)-\psi(2x+1)+\frac{2x(1+x)}{(1+2x)(1+2x-2x^2)}\right]\triangleq2\hat{f}(x),\\ \nonumber
        \hat{f}'(x)&=\psi'(x+1)-2\psi'(2x+1)+\frac{2(1+2x+2x^2+8x^3+4x^4)}{(1+2x)^2(1+2x-2x^2)^2},
    \end{align}
which together with \eqref{lm2.2-1} implies
\begin{align*}
   \hat{f}'(x)&>\mathcal{L}_x(x,4/5)-2\mathcal{L}_x(2x,2/5)+\frac{2(1+2x+2x^2+8x^3+4x^4)}{(1+2x)^2(1+2x-2x^2)^2}  \\
   &=\dfrac{\left(\begin{array}{l}
    5533 + 37994 x + 92054 x^2 + 935456 x^3 + 11448028 x^4+ 67479864 x^5\\ 
    \quad+232646232 x^6 + 513934848 x^7 + 747879552 x^8+717361920 x^9\\
     \qquad+ 442143360 x^{10} + 158630400 x^{11} + 18662400 x^{12}
   \end{array}\right)}{2\left[\begin{array}{l}
    (1+2x)^2 (1+2x-2x^2)^2(17+15x+15x^2)\\
    \quad\times(11+36x+36x^2)(11+30x+60x^2)(5+36x+72x^2)
   \end{array}\right]}>0.
\end{align*}
That is to say, $\hat{f}(x)$ is strictly increasing on $(0,\infty)$. According to this with \eqref{lm2.4-1} and $\hat{f}(0)=f(0)=0$, it follows that $f(x)>0$ for $0<x<(\sqrt{3}+1)/2$.
\end{proof}

The following lemma offers a simple criterion to determine the sign of a class of special polynomial, which is called a Positive-Negative type (PN type for short) polynomial (see \cite{YTW2020,YT2018,YQCZ2017}) while its opposite is called a Negative-Positive type polynomial. More information related to these polynomials and the series version can be found in \cite{YQCZ2018,Yang2018,YangT2019}.

\begin{lemma}({\rm \cite[Lemma 4]{YT2018}})\label{lm-4}
Let $P_n(x)$ be a Positive-Negative type polynomial of degree $n$, which is given by
\begin{equation*}
P_n(x)=\sum_{k=0}^ma_kx^k-\sum_{k=m+1}^na_kx^k,
\end{equation*} where $a_k\geq0$ for all $k\geq0$ and there exist at least two integers $0\leq k_1\leq m$ and $m+1\leq k_2\leq n$ such that $a_{k_1},a_{k_2}\neq0$.
Then there exist $x_0\in(0,\infty)$ such that
$P_n(x_0)=0$ and $P_n(x)>0$ for $x\in(0,x_0)$ and $P_n(x)<0$ for $x\in(x_0,\infty).$  
\end{lemma}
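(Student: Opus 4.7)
The plan is to reduce the claim to the strict monotonicity of a single auxiliary function on $(0,\infty)$. The natural normalization is
$$\phi(x) := \frac{P_n(x)}{x^m} = \sum_{k=0}^m a_k x^{k-m} - \sum_{k=m+1}^n a_k x^{k-m}, \qquad x > 0,$$
because the sign of $P_n(x)$ on $(0,\infty)$ coincides with that of $\phi(x)$, and every exponent in the positive block is now $\le 0$ while every exponent in the negative block is $\ge 1$. This asymmetry is what makes the argument run.

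The first step is to show that $\phi$ is strictly decreasing on $(0,\infty)$. Term-by-term differentiation produces only non-positive contributions: $a_k(k-m)x^{k-m-1}\le 0$ for $k<m$, zero for $k=m$, and $-a_k(k-m)x^{k-m-1}\le 0$ for $k>m$. The hypothesis provides at least one index $k_2\ge m+1$ with $a_{k_2}>0$, whose contribution is \emph{strictly} negative on $(0,\infty)$, so $\phi'(x)<0$ throughout.

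The second step is to read off the boundary behavior. Because some $a_{k_2}>0$ with $k_2-m\ge 1$, we have $\phi(x)\to-\infty$ as $x\to\infty$. At the other end, the hypothesis supplies some $a_{k_1}>0$ with $k_1\le m$: if $k_1<m$, the term $a_{k_1}x^{k_1-m}$ drives $\phi(x)\to+\infty$, while in the edge case where $k_1=m$ is the only positive contributor, one still obtains $\phi(x)\to a_m>0$. In either situation $\lim_{x\to 0^+}\phi(x)>0$.

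Combining continuity, strict monotonicity, and the opposite-signed limits, the intermediate value theorem yields a unique $x_0\in(0,\infty)$ with $\phi(x_0)=0$, equivalently $P_n(x_0)=0$; then $P_n(x)=x^m\phi(x)$ inherits the sign of $\phi$, giving $P_n>0$ on $(0,x_0)$ and $P_n<0$ on $(x_0,\infty)$. I do not anticipate a genuine obstacle here: the only subtlety is the boundary case in which one of the two blocks contains only the coefficient adjacent to the sign change (i.e.\ $a_m$ or $a_{m+1}$), and this is absorbed cleanly by the limit and derivative analyses above, since the opposite block always supplies a strictly monotone term of exponent at least $1$ in absolute value.
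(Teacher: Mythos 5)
Your proof is correct and complete. Note that the paper itself does not prove this lemma at all --- it is quoted verbatim from \cite[Lemma 4]{YT2018} --- so there is no in-paper argument to compare against; your normalization $\phi(x)=P_n(x)/x^m$, the term-by-term verification that $\phi'<0$ (with strictness supplied by the coefficient $a_{k_2}>0$ in the negative block), the limits $\phi(0^+)>0$ and $\phi(+\infty)=-\infty$, and the intermediate value theorem constitute a valid self-contained proof, and this is in fact essentially the standard argument used in the cited source. The only point worth a passing remark is the one you already handled: when $k_1=m$ is the sole nonzero coefficient in the positive block the limit at $0^+$ is the finite positive number $a_m$ rather than $+\infty$, which changes nothing in the conclusion.
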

\begin{remark}\label{rmk-1}
For a PN type polynomial, it can be easily seen that $P_n(x)>0$ for $x\in(0,x_1)$ if $P_n(x_1)>0$ and $P_n(x)<0$ for $x\in(x_2,\infty)$ if $P_n(x_2)<0$. Correspondingly, for a NP type polynomial $P_n(x)$, the signs of $P_n(x)$ on $(0,x_1)$ and $(x_2,\infty)$ are reversed. 
\end{remark}

By Applying Lemma \ref{lm-4},  we list some examples as PN or NP type polynomials below, which are used to prove the main theorem.
\begin{example} \label{example-1}
Let us introduce several PN type polynomials as
\begin{align*}
  p_0(x)&=1802+51043x-183763x^2-789936x^3-613605x^4,\\
  p_1(x)&=\left[\begin{array}{l}
       158486648 + 11733123789 x + 31469130662 x^2 + 40356637167 x^3\\
  \quad+19492784598 x^4 - 25810079475 x^5 - 68587238430 x^6 - 84246205050 x^7\\
  \quad- 76169872800 x^8 - 54370440000 x^9 - 28066500000 x^{10} - 9112500000
   x^{11}
  \end{array}\right],\\
  p_2(x)&=\left[\begin{array}{l}
9221+85665x+368643x^2+918717x^3+1422360 x^4\\
\quad+1354842x^5+652860x^6-76140x^7-291600x^8-145800 x^9 
  \end{array}\right],\\
  p_3(x)&=\left[\begin{array}{l} 3699375712 + 26523455964 x + 60608029362 x^2+ 68646948321 x^3+ 32033337786 x^4\\
     \quad- 25862152266 x^5-50136084000x^6-35657677440 x^7-11393978400x^8\end{array}\right],\\
   p_4(x)&=37+90x-93x^2-636x^3-810x^4-540x^5.
\end{align*}
Since $p_0(3/20)=75107551/32000$, $p_1(1/5)=64124455182553/15625$, $p_2(1)=4298768$, $p_3(1)=68461255039$ and $p_4(9/25)=21101408/1953125$, we conclude from Lemma \ref{lm-4} that
\begin{align}\label{exp1-1}
p_0(x)&>0 \quad\text{for}\quad x\in[0,3/20],\\ \label{exp1-2}
p_1(x)&>0 \quad\text{for}\quad x\in[0,1/5],\\ \label{exp1-3} 
p_2(x)&>0\quad\text{for}\quad x\in[0,1],\\ \label{exp1-4}
p_3(x)&>0 \quad\text{for}\quad x\in[0,1],\\ \label{exp1-5}
p_4(x)&>0 \quad\text{for}\quad x\in[0,9/25].
\end{align}
\end{example}

\begin{example}\label{example-2}
In the following, we will use several NP type polynomials, which are given by 
  \begin{align*}
q_0(x)&=-11 - 5 x + 11 x^2 + 5 x^3,\hspace{1.12cm} q_1(x)=-5+129x+131x^2+33x^3,\\
q_2(x)&=-65+254x+242x^2+49x^3,\quad q_3(x)=-84+222x+157x^2+19x^3,\\ q_4(x)&=-45+101x+43x^2+2x^3,\hspace{0.8cm}  q_5(x)=-11+23x+4x^2.
\end{align*}
Since $q_0(1/2)=-81/8$, $q_1(1/2)=771/8$, $q_2(1/2)=1029/8$, $q_3(1/2)=549/8$, $q_4(1/2)=33/2$ and $q_5(1/2)=3/2$, it follows from Lemma \ref{lm-4} that 
$q_0(x)<0$ for $x\in(0,1/2)$ and each of $q_j(x)$ ($1\leq j\leq5$) has a unique root on $(0,1/2)$. Suppose that $x_j$ is the positive root of $q_j(x)$ for $1\leq j\leq5$. A simple numerical computation gives $x_1=0.03733\cdots<$ $x_2=0.2114\cdots<x_3=0.3085\cdots<x_4=0.3822\cdots<x_5=0.4439\cdots$. In other words, by Remark \ref{rmk-1}, we see that $q_{j}(x)<0$ implies $q_{j+1}(x)<0$ on $(0,1/2]$ for $1\leq j\leq4$.
\end{example}

\section{Proof of Theorem \ref{thm-1}}

In this section, we give the proof of Theorem \ref{thm-1} in the case of $0<x+y\leq1$. Without loss of generality, we assume that $x\leq y$. 
\begin{proofa}
Let $0<x\leq y\leq1-x$ (namely, $0<x\leq1/2$) and 
\begin{align}\nonumber
F(x,y)&=\log\left[\frac{xyB(x,y)}{x+y}\right]-\log\left(1-\frac{2xy}{x+y+1}\right)\\ \label{pr-1}
&=\log\left[\frac{\Gamma(x+1)\Gamma(y+1)}{\Gamma(x+y+1)}\right]-\log\left(1-\frac{2xy}{x+y+1}\right).
\end{align}

Differentiating $F$ with respect to $x$ (res. $y$) gives
\begin{equation}\label{pr-2}
     \frac{\partial F}{\partial x}=\psi(x+1)-\psi(x+y+1)+\frac{2y(1+y)}{(1+x+y)(1+x+y-2xy)}
\end{equation}
   and
\begin{align}\nonumber
    \frac{\partial F}{\partial y}&=\psi(y+1)-\psi(x+y+1)+\frac{2x(1+x)}{(1+x+y)(1+x+y-2xy)}\\ \label{pr-3}
    &=\psi(y+1)-\psi(y+x)-\frac{1}{x+y}+\frac{2x(1+x)}{(1+x+y)(1+x+y-2xy)}.
\end{align}

We divide into two cases to complete the proof.
\begin{description}[leftmargin=1em]
\item[\sl Case 1:] $1/5\leq x\leq1/2$ and $x\leq y\leq 1-x$.\\
By taking $n=3$ into Lemma \ref{lm-1}, it follows from \eqref{pr-3} that
\begin{align*}
    \frac{\partial F}{\partial y}&>(1-x)\left[\frac{1}{x+y+3}+\sum_{i=0}^{2}\frac{1}{(y+i+1)(y+i+x)}\right]\\
    &\hspace{2cm}-\frac{1}{x+y}+\frac{2x(1+x)}{(1+x+y)(1+x+y-2xy)}\\
    &=\frac{xQ(x,y)}{(1+x+y-2xy)\prod_{j=1}^{3}\big[(y+j)(x+y+j)\big]},
\end{align*}
where $Q(x,y)=-q_0(x)+\sum_{k=1}^5q_k(x)y^k-(1-2x)y^6$
and $q_k(x)$ ($0\leq k\leq5$) is given in Example \ref{example-2}. From Example \ref{example-2}, we see that $Q(x,y)$ is a PN type polynomial of $y$ for $x\in(0,1/2]$. So we conclude, by Lemma \ref{lm-4}, that
$Q(x,y)>0$ for $1/5<x\leq1/2$ and $x\leq y\leq 1-x$ together with
\begin{equation*}
Q(x,1-x)=\frac{4}{625}(1-x)\Big[252+(5x-1)\big(7137+(1-x)(24365+375x^2)+5300x^2\big)\Big]>0.
\end{equation*}
Hence, $F(x,y)\geq F(x,x)=f(x)>0$ for $1/5\leq x\leq1/2$ and $x\leq y\leq 1-x$ by Lemma \ref{lm-3}.
\item[\sl Case 2:] $0<x<1/5$ and $x\leq y\leq 1-x$.\\
In this case, we denote by $D$ and $\partial D$ the trapezoidal domain $\{(x,y)|\ x<y<1-x,\ 0<x<1/5\}$ and its boundary. Then we will prove that $F(x,y)$ has no extreme value for $(x,y)\in D$.

By \eqref{pr-2} and \eqref{pr-3}, 
\begin{equation}\label{pr-4}
    G(x,y)=\frac{\partial F(x,y)}{\partial x}-\frac{\partial F(x,y)}{\partial y}=\psi(x+1)-\psi(y+1)-\frac{2(x-y)}{1+x+y-2xy}.
\end{equation}
We continue to divide into three subcases.
\begin{description}[leftmargin=1em]
\item[\it Subcase A:] $y\geq x+9/25$.\\
In this case, by differentiation, we obtain
\begin{align}\nonumber
     \frac{\partial G}{\partial x}&=\psi'(x+1)-\frac{2(1+2y-2y^2)}{(1+x+y-2xy)^2},\\ \label{pr-5}
     \frac{\partial^2 G}{\partial x\partial y}&=\frac{\partial^2 G}{\partial y\partial x}=\frac{12(y-x)}{(1+x+y-2xy)^3}>0,
\end{align}
which gives
\begin{equation}\label{pr-6}
    \frac{\partial G(x,y)}{\partial x}>\frac{\partial G(x,x+9/25)}{\partial x}=\psi'(x+1)-\frac{913+350x-1250x^2}{2(17+16x-25x^2)^2}\triangleq g(x).
\end{equation}
It follows from \eqref{lm2.2-1} and \eqref{exp1-1} that
\begin{align*}
 g(x)&>\mathcal{L}_x(x,4/5)-\frac{913+350x-1250x^2}{2(17+16x-25x^2)^2}\\
 &=\frac{p_0(x)+307230 x^5 + 823500 x^6 + 675000 x^7}{2 (17 + 15 x + 15 x^2) (17+16x-25x^2)^2 (11 + 36 x + 36 x^2)}>0
\end{align*}for $x\in(0,3/20]$. On the other hand, by \eqref{lm2.2-2} and \eqref{exp1-2},
\begin{align*}
g'(x)&=\psi''(x+1)+\frac{11633-21600x-13125x^2+31250x^3}{(17+16x-25x^2)^3}\\
&<\mathcal{L}_{xx}(x,4/5)+\frac{11633-21600x-13125x^2+31250x^3}{(17+16x-25x^2)^3}\\
&=-\frac{127679911(10x-1)+xp_1(x)}{2(17+15x+15x^2)^2(17+16x-25x^2)^3(11+36x+36x^2)^2}<0
\end{align*}for $x\in(1/10,1/5)$, which implies that $g(x)>g(1/5)=0.001914\cdots>0$ for $x\in(1/10,1/5)$. In summary, $\partial G/\partial x>0$ for $0<x<1/5$ by \eqref{pr-6}.
Further, by \eqref{lm2.2-2}, \eqref{exp1-3} and \eqref{pr-4}, for $y\in[0,1]$,
\begin{align*}
    \frac{d^2}{dy^2}G(0,y)&=-\frac{4}{(1+y)^3}-\psi''(1+y)<-\frac{4}{(1+y)^3}-\mathcal{L}_{yy}(y,2/5)\\
    &=-\frac{p_2(y)}{2(1+y)^3(11+15y+15y^2)^2(5+18y+18y^2)^2}<0,
\end{align*}which shows that $G(0,y)$ is strictly concave on $[0,1]$. 
Hence, 
\begin{equation*}
    G(x,y)>G(0,y)\geq\min\big\{G(0,0),G(0,1)\big\}=0
\end{equation*}
for $0<x<1/5$ and $x+9/25\leq y<1$.
\item[\it Subcase B:] $9/25<y<x+9/25$.\\
In this case, $y-9/25<x<1/5$. It is easy to see from \eqref{pr-5} that $\partial G/\partial y$ is strictly increasing for $0<x<y$ and so by \eqref{lm2.2-1}
\begin{align*}
\frac{\partial G(x,y)}{\partial y}&>\frac{\partial G(y-9/25,y)}{\partial y}\\
&=\frac{13 + 2150 y - 1250 y^2}{2 (8 + 34 y - 25 y^2)^2} -\psi'(y+1)>\frac{13 + 2150 y - 1250 y^2}{2 (8 + 34 y - 25 y^2)^2} -\mathcal{L}_y(y,2/5)\\
&=\dfrac{5275352+(25y-9)\left[
\begin{array}{l}
     4404553 + 18643550 y + 55576875 y^2+88996875 y^3 \\
    \quad + 9375000 y^4+843750y^4(1-y)(57+50y)
\end{array}
\right]}{6250(11 + 15 y + 15 y^2)(5+18y+18 y^2) (8+34y-25 y^2)^2}>0.
\end{align*}
Moreover, by \eqref{lm2.2-2} and \eqref{exp1-4},
\begin{align*}
    \frac{d^2}{dx^2}G(x,9/25)&=\psi''(x+1)+\frac{25564}{(34+7x)^3}<\mathcal{L}_{xx}(x,4/5)+\frac{25564}{(34+7x)^3}\\
    &=-\frac{p_3(x)+200037600 x^9}{2(34+7x)^3(17+15x+15x^2)^2(11+36x+36x^2)^2}<0
\end{align*}
for $x\in(0,1/5)$. That is to say, $G(x,9/25)$ is strictly concave on $(0,1/5)$ and thereby
\begin{align*}
  G(x,y)>G(x,9/25)&>\min\big\{G(0,9/25),G(1/5,9/25)\big\}\\
  &=\min\{0.0554\cdots,0.04015\cdots\}>0. 
\end{align*}
\item[\it Subcase C:] $x<y\leq9/25$.\\
As mentioned in Subcase B, $\partial G/\partial y$ is strictly increasing for $0<x<y$ and so by \eqref{lm2.2-1}
\begin{align*}
 \frac{\partial G(x,y)}{\partial y}&>\frac{\partial G(0,y)}{\partial y}=\frac{2}{(y+1)^2}-\psi'(y+1)>\frac{2}{(y+1)^2}-\mathcal{L}_{y}(y,2/5)\\
 &=\frac{p_4(y)}{2 (1 + y)^2 (11 + 15 y + 15 y^2) (5 + 18 y + 18 y^2)}>0
\end{align*}for $0<x<y\leq9/25$, where $p_4(y)>0$ for $y\in(0,9/25]$ follows from \eqref{exp1-5}. Hence,
$G(x,y)>G(x,x)=0$ by \eqref{pr-4}.
\end{description}
In conclusion, $G(x,y)>0$ for all $(x,y)\in D$, which shows that $F(x,y)$ has no extreme value for $(x,y)\in D$. Otherwise there exists a point $(x_0,y_0)\in D$ such that 
\begin{equation*}
    \frac{\partial F(x_0,y_0)}{\partial x}=\frac{\partial F(x_0,y_0)}{\partial y}=0\quad\Longleftrightarrow\quad G(x_0,y_0)=0.
\end{equation*}
For $(x,y)\in\partial D$, $F(x,y)$ has the following properties: 
\begin{enumerate}[itemindent=*,label=(\roman*)]
    \item when $x+y=1$, $F(x,y)>0$ by \eqref{sec3-1} and \eqref{pr-1};
    \item when $x=0$, it is easy to see from \eqref{pr-1} that $F(0,y)=0$;
    \item when $x=y$, by Lemma \ref{lm-3}, $F(x,x)=f(x)>0$ for $x\in(0,1)$;
    \item when $x=1/5$, it has been proved in Case 1 that $F(1/5,y)>0$ for $1/5\leq x\leq4/5$.
\end{enumerate}
Due to the continuity of $F(x,y)$ on $\Bar{D}$, it follows that $F(x,y)$ can only attain its minimal value at the boundary of $\bar{D}$, that is to say,
\begin{equation*}
    F(x,y)\geq\min_{(x,y)\in\partial D}\big\{F(x,1-x),F(0,y),F(x,x),F(1/5,y)\big\}=0
\end{equation*}with the equality only for $x=0$.
\end{description}
This completes the proof. \qed
\end{proofa}

\noindent{\bf Funding}\\
\begin{small}
This work was supported by the National Natural Science Foundation of China (11971142) and the Natural Science Foundation of Zhejiang Province (LY19A010012).    
\end{small}

\bigskip

\noindent{\bf Availability of data and materials}\ \   
\begin{small}
    Not applicable.
\end{small}
\bigskip

\noindent{\bf Declarations}

\noindent{\bf Conflict of interest}\  \ 
\begin{small}
    The authors declare that they have no conflict of interest.
\end{small}

\small

\end{document}